\renewcommand{\subjclassname}{\textup{2000} Mathematics Subject Classification}
\newtheorem{theorem}{Theorem}[section]
\newtheorem{lemma}[theorem]{Lemma}
\theoremstyle{definition}
\newtheorem{example}[theorem]{Example}
\theoremstyle{remark}
\newtheorem{remark}[theorem]{Remark}
\numberwithin{equation}{section}
 \def\R{{\mathbb R}}
 \def\E{{\mathcal E}} 
 \def\eR{{\mathbb R} \cup \{+\infty\}}
 \def\N{{\mathbb N}}
 \def\cH{{\mathcal H}}
 \def\mr{{\rm MR}_p (X, D_A, D_B)}
 \def\mr1{{\rm MR}_{p,1} (X, D_A, D_B)}
\DeclareMathOperator{\Dom}{dom}
\newcommand{\dom}[1]{\Dom{#1}}
\begin{document}
%-------------------------------------------------------------------------
%editorial commands: to be inserted by the editorial office
%
%\firstpage{1}
%\volume{228}
%\Copyrightyear{2004}
%\DOI{003-0001}
%
%
%\seriesextra{Just an add-on}
%\seriesextraline{This is the Concrete Title of this Book\br H.E. R and S.T.C. W, Eds.}
%
% for journals:
%
%\firstpage{1}
%\issuenumber{1}
%\Volumeandyear{1 (2004)}
%\Copyrightyear{2004}
%\DOI{003-xxxx-y}
%\Signet
%\commby{inhouse}
%\submitted{March 14, 2003}
%\received{March 16, 2000}
%\revised{June 1, 2000}
%\accepted{July 22, 2000}
%
%
%
%---------------------------------------------------------------------------
%Insert here the title, affiliations and abstract:
%

\title[Nonsmooth infinite-dimensional gradient systems]{The Kurdyka-{\L}ojasiewicz-Simon inequality and stabilisation in nonsmooth infinite-dimensional gradient systems}

\author{Ralph Chill}
\address{Technische Universit\"at Dresden,
Institut f\"ur Analysis,
01062 Dresden,
Germany}
\email{ralph.chill@tu-dresden.de}
%\thanks{}

\author{Sebastian Mildner}

\address{Technische Universit\"at Dresden,
Institut f\"ur Analysis,
01062 Dresden,
Germany}
\email{sebastian.mildner@tu-dresden.de}

% General info
%\subjclass{}

\date{\today}

\renewcommand{\subjclassname}{\textup{2000} Mathematics Subject Classification}

\subjclass{Primary 34A60, 26D10; Secondary 47J35, 49J52}

\keywords{Gradient system, subgradient, Kurdyka-{\L}ojasiewicz-Simon inequality} 
   %analytic semigroup, Banach, 
   %algebras, non-quasinilpotent, quasi\-nil\-potent}        % the keywords

% \MRSubClass{47D03, 46J40, 46H20} 

%\subjclass{Primary 99Z99; Secondary 00A00}

%\keywords{Class file, journal}

%\date{\today}

%----------additions
%\dedicatory{}
%%% ----------------------------------------------------------------------

\begin{abstract}
We state and prove a stabilisation result for solutions of abstract gradient systems associated with nonsmooth energy functions on infinite dimensional Hilbert spaces. One feature is that in this general setting the assumption on the range of the solution can be considerably relaxed, which considerably simplifies the applicability of the stabilisation result even in the case of smooth energies. 
\end{abstract}
 
\renewcommand{\subjclassname}{\textup{2000} Mathematics Subject Classification}

%%% ----------------------------------------------------------------------
\maketitle
%%% ----------------------------------------------------------------------

\section{Introduction}

The {\L}ojasiewicz gradient inequality for real analytic functions on $\R^N$ \cite{Lo63,Lo65} and its generalisations to functions definable in $o$-minimal structures \cite{Ku98} or to smooth functions on infinite dimensional Hilbert spaces \cite{Si83,Je98b,Ch03} have proved to be major tools in the study of asymptotic behaviour of gradient and gradient-like systems. The {\L}ojasiewicz-Simon inequality for smooth energy functions on infinite-dimensional Hilbert spaces has been applied in order to prove stabilisation of bounded solutions of many parabolic equations such as diffusion equations, Cahn-Hilliard type equations for describing phase separation phenomena, or geometric evolution equations, but also to hyperbolic equations such as damped wave equations; the literature being vast, we merely refer to the monographs by Haraux \& Jendoubi \cite{HaJe15}, Huang \cite{Hu06} and the references therein. 

In this article, we consider nonsmooth gradient systems in infinite dimensional Hilbert spaces, associated with semiconvex, lower semicontinuous energy functions and their subgradients. We show that the Kurdyka-{\L}ojasiewicz-Simon inequality may also applied in this general setting in order to prove stabilisation of solutions of gradient systems. The point of this article is, however, not only this generalisation. Unlike in the situation of smooth energy functions, which are at least continuously differentiable functions defined on (open subsets of) a Banach space, a natural energy space is not present in the case of energy functions defined on a Hilbert space and taking values in the extended real line. The role of energy space is taken over by the effective domain which, however, carries in general no linear structure. 

This article starts with a small but useful observation. The effective domain of a function $\E$ on a metric space $M$ always carries a natural topology $\tau_\E$, so that $(\dom{\E} , \tau_\E )$ is continuously embedded into $(M,d)$ and so that $\E$ is continuous on $(\dom{\E} , \tau_\E )$. Actually, we take the coarsest topology with these two properties. We show that in the case of the classical Dirichlet energy of the Neumann-Laplacian on $L^2 (\Omega )$, but also for semilinear perturbations of this energy, this natural topology coincides with the norm topology on the Sobolev space $H^1 (\Omega )$. 

This small observation is used in the second part where we state and prove the stabilisation result for global, bounded solutions of associated gradient systems. This result uses the Kurdyka-{\L}ojasiewicz-Simon inequality, named after the Kurdyka-{\L}ojasiewicz inequality for functions definable in $o$-minimal structures and after the {\L}ojasiewicz-Simon inequality for functions defined on Hilbert spaces. A new feature is that the usual assumption of relative compactness of the global solution in the energy space (or in the effective domain equipped with the topology mentioned above) can be considerably weakened to the assumption of relative compactness of the solution in the ambient Hilbert space. In many applications of the {\L}ojasiewicz-Simon inequality the verification of the relative compactness of the range of the solution in the energy space required a lot of efforts and advanced techniques, while the relative compactness of the range of the solution in the ambient Hilbert space often follows from a standard application of Rellich-Kondrachov. Our result thus seems to be of interest even in the case of smooth energies with effective domains having a linear structure.

\section{Topology and metric induced by the energy}

Let $(M,d)$ be a metric space and let $\E : M \to\eR$ be an energy function with values in the extended real line. We suppose that $\E$ is {\em proper} in the sense that the {\em effective domain} $\dom{\E} := \{ \E <+\infty\}$ is nonempty.  We equip $\dom{\E}$ with a topology $\tau_\E$, namely the coarsest topology for which the natural embedding $\dom{\E}\to M$ and the mapping $\E : \dom{\E} \to \R$ are continuous. A net $(u_\alpha)$ in $\dom{\E}$ thus converges to $u\in\dom{\E}$ with respect to the topology $\tau_\E$ if and only if $\lim_{\alpha} d(u_\alpha ,u) = 0$ and $\lim_{\alpha} \E (u_\alpha) = \E (u)$. As a consequence of the simple structure of the topology $\tau_\E$, we have the following lemma. 

\begin{lemma} \label{lem.0}
The topology $\tau_\E$ is metrizable. For example, the topology $\tau_\E$ is induced by the metric $d_\E : \dom{\E}\times \dom{\E}\to\R$ given by 
\[
 d_\E (u,v) := d(u,v) + |\E (u) - \E (v)| \quad (u,\, v\in \dom{\E}) .
\]
\end{lemma}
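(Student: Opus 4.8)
The plan is to verify directly that $d_\E$ is a metric and that the metric topology it induces coincides with the initial topology $\tau_\E$. First I would check the metric axioms for $d_\E$. Nonnegativity and symmetry are immediate, and $d_\E (u,v) = 0$ forces $d(u,v) = 0$, hence $u = v$, since $d$ is a metric. The triangle inequality follows by adding the triangle inequalities for $d$ and for the absolute value on $\R$: for $u,v,w\in\dom{\E}$ one has $d(u,w)\le d(u,v)+d(v,w)$ and $|\E (u)-\E (w)|\le |\E (u)-\E (v)|+|\E (v)-\E (w)|$, and summing the two yields $d_\E (u,w)\le d_\E (u,v)+d_\E (v,w)$.

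It remains to identify the metric topology $\tau_{d_\E}$ with $\tau_\E$, which I would do by proving the two inclusions separately. For $\tau_\E\subseteq\tau_{d_\E}$, observe that both maps defining $\tau_\E$ are $1$-Lipschitz for $d_\E$: from $d(u,v)\le d_\E (u,v)$ the embedding $(\dom{\E} ,d_\E )\to (M,d)$ is continuous, and from $|\E (u)-\E (v)|\le d_\E (u,v)$ the map $\E :(\dom{\E} ,d_\E )\to\R$ is continuous. Since $\tau_\E$ is by definition the coarsest topology making these two maps continuous, it is contained in $\tau_{d_\E}$.

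For the reverse inclusion $\tau_{d_\E}\subseteq\tau_\E$ it suffices to show that each open ball $B(v_0 ,r)=\{ u\in\dom{\E} : d_\E (u,v_0 )<r\}$ is $\tau_\E$-open. This follows once one checks that the function $u\mapsto d_\E (u,v_0 )$ is $\tau_\E$-continuous: it is the sum of $u\mapsto d(u,v_0 )$, continuous because the embedding into $M$ is $\tau_\E$-continuous and $d(\cdot ,v_0 )$ is continuous on $M$, and of $u\mapsto |\E (u)-\E (v_0 )|$, continuous because $\E$ is $\tau_\E$-continuous. Then $B(v_0 ,r)$ is the preimage of $(-\infty ,r)$ under this continuous function, hence $\tau_\E$-open, and the two topologies coincide.

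I do not expect any genuine obstacle here; the statement is essentially a direct verification, and the only point requiring a little care is the second inclusion, where one must exhibit the $d_\E$-balls as $\tau_\E$-open sets rather than merely compare convergent sequences. Alternatively, one could argue via nets: the characterisation of convergence in $\tau_\E$ recalled before the lemma shows that a net converges in $\tau_\E$ exactly when it converges for $d_\E$, since $d_\E (u_\alpha ,u)\to 0$ is equivalent to $d(u_\alpha ,u)\to 0$ together with $\E (u_\alpha )\to\E (u)$; as a topology is determined by its convergent nets, the two topologies agree.
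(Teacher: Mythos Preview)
Your proof is correct; the paper in fact states this lemma without proof, treating it as an immediate consequence of the net-convergence characterisation recalled just before the lemma. Your direct verification of the metric axioms and the two topology inclusions is exactly the routine argument one would supply, and your alternative via nets is essentially the implicit reasoning the paper has in mind.
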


\begin{example} \label{ex.1}
On the Hilbert space $H = L^2 (\Omega )$ ($\Omega\subseteq\R^N$ open) we consider the function $\E_1 : L^2 (\Omega ) \to \eR$ given by $\E_1 (u) = \frac12 \int_\Omega |\nabla u|^2$ with effective domain $\dom{\E_1} = H^1 (\Omega )$. A sequence $(u_n)$ in $H^1 (\Omega )$ converges with respect to $\tau_{\E_1}$ to some element $u\in H^1 (\Omega )$ if and only if $\lim_n u_n = u$ in $L^2 (\Omega )$ and $\lim_n \E_1 (u_n ) = \E_1 (u)$ in $\R$. As a consequence, if a sequence $(u_n)$ converges to $u\in H^1 (\Omega )$ with respect to $\tau_{\E_1}$, then necessarily $(u_n)$ is bounded in $(H^1 (\Omega ) , \|\cdot \|_{H^1} )$. By reflexivity of $H^1 (\Omega )$ and by continuity of the embedding $H^1 (\Omega )\hookrightarrow L^2 (\Omega )$, the sequence $(u_n )$ thus converges weakly to $u\in H^1 (\Omega )$. However, the convergence in $\tau_{\E_1}$ implies in addition that $\lim_n \| u_n\|_{H^1} = \| u\|_{H^1}$, and hence $(u_n)$ converges to $u$ in the norm topology of $H^1 (\Omega )$. Obviously, the converse implication -- saying that convergence in the norm topology implies convergence in $\tau_{\E_1}$ -- is true, too, and hence, using also Lemma \ref{lem.0}, both topologies coincide. 
\end{example}

\begin{lemma} \label{lem.1}
Let $(M,d)$ be a metric space. Let $\E_1$, $\E_2 :M\to\eR$ be two functions, and let $\E := \E_1 + \E_2$. Then:
\begin{itemize}
 \item[(a)] If $\E_2$ is continuous with respect to the topology $\tau_{\E_1}$, then $\tau_\E$ is coarser than $\tau_{\E_1}$.
 \item[(b)] If $\E_2$ is continuous with respect to the topology in $M$, then $\tau_\E = \tau_{\E_1}$. 
\end{itemize}
\end{lemma}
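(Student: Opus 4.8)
The plan is to exploit the universal property that defines $\tau_\E$ as the coarsest topology on $\dom{\E}$ making both the embedding $\iota : \dom{\E} \to M$ and the map $\E : \dom{\E} \to \R$ continuous: a map into $(\dom{\E},\tau_\E)$ is continuous precisely when its compositions with $\iota$ and with $\E$ are continuous. Equivalently, since by Lemma~\ref{lem.0} the topologies are metrizable, one may argue entirely with convergent sequences, using that $u_n\to u$ in $\tau_{\E_i}$ means $d(u_n,u)\to 0$ together with $\E_i(u_n)\to\E_i(u)$. Before comparing the topologies I would settle the bookkeeping of effective domains. In (a), continuity of $\E_2$ with respect to $\tau_{\E_1}$ presupposes that $\E_2$ is real-valued on $\dom{\E_1}$, so $\dom{\E_1}\subseteq\dom{\E_2}$ and hence $\dom{\E}=\dom{\E_1}\cap\dom{\E_2}=\dom{\E_1}$; likewise in (b) continuity of $\E_2$ on $M$ forces $\E_2$ to be finite, giving again $\dom{\E}=\dom{\E_1}$. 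Thus in both parts the two topologies live on the same underlying set and the sum $\E_1+\E_2$ is everywhere well defined there.

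For part (a), I would show that the identity map $(\dom{\E_1},\tau_{\E_1})\to(\dom{\E},\tau_\E)$ is continuous, which is exactly the assertion that $\tau_\E$ is coarser than $\tau_{\E_1}$. By the universal property it suffices to check that $\iota$ and $\E$ are continuous from $(\dom{\E_1},\tau_{\E_1})$. The embedding is continuous by the very definition of $\tau_{\E_1}$; and $\E=\E_1+\E_2$ is a sum of two $\tau_{\E_1}$-continuous real-valued functions---$\E_1$ by definition of $\tau_{\E_1}$ and $\E_2$ by hypothesis---hence $\tau_{\E_1}$-continuous. In sequential terms: if $u_n\to u$ in $\tau_{\E_1}$ then $d(u_n,u)\to 0$ and $\E_1(u_n)\to\E_1(u)$, while $\E_2(u_n)\to\E_2(u)$ by continuity of $\E_2$, so $\E(u_n)\to\E(u)$ and therefore $u_n\to u$ in $\tau_\E$.

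For part (b), continuity of $\E_2$ on $M$ together with continuity of the embedding $(\dom{\E_1},\tau_{\E_1})\to M$ yields $\tau_{\E_1}$-continuity of $\E_2$, so part (a) applies and gives $\tau_\E\subseteq\tau_{\E_1}$. It remains to prove the reverse inclusion, for which I would show that the identity $(\dom{\E},\tau_\E)\to(\dom{\E_1},\tau_{\E_1})$ is continuous. By the universal property of $\tau_{\E_1}$ this reduces to continuity of the embedding into $M$ (true by definition of $\tau_\E$) and of $\E_1$. Here I would write $\E_1=\E-\E_2$: the map $\E$ is $\tau_\E$-continuous by definition, and $\E_2$ restricted to $\dom{\E}$ is $\tau_\E$-continuous because the embedding into $M$ is $\tau_\E$-continuous and $\E_2:M\to\R$ is continuous; a difference of $\tau_\E$-continuous real-valued functions is $\tau_\E$-continuous, which gives the claim. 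Combining both inclusions yields $\tau_\E=\tau_{\E_1}$.

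The only genuine subtlety, and the step I would treat most carefully, is the domain bookkeeping described above: the manipulations $\E=\E_1+\E_2$ and $\E_1=\E-\E_2$ are legitimate only once one knows the function being subtracted is finite on the relevant set, which is why the finiteness forced by the continuity hypotheses must be recorded first. The topological comparison itself is then a routine application of the universal property (or, via Lemma~\ref{lem.0}, of the sequential characterisation of $\tau_\E$), together with the elementary fact that sums and differences of continuous real-valued functions are continuous.
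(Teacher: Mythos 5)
Your proof is correct and follows essentially the same route as the paper: part (a) by observing that $\E=\E_1+\E_2$ is $\tau_{\E_1}$-continuous so that the coarsest-topology property forces $\tau_\E$ to be coarser, and part (b) by the symmetry $\E_1=\E-\E_2$ combined with (a). Your explicit bookkeeping of the effective domains (noting that the continuity hypotheses force $\dom{\E}=\dom{\E_1}$) is a point the paper leaves implicit, but it does not change the argument.
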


\begin{proof}
(a) By assumption and by definition of $\tau_{\E_1}$, both $\E_1$ and $\E_2$ are continuous with respect to the topology $\tau_{\E_1}$, and hence $\E$ is continuous with respect to this topology. By definition again, the topology $\tau_\E$ must be coarser than the topology $\tau_{\E_1}$. 

(b) This follows by symmetry ($\E_1 = \E - \E_2$) and by applying (a).  
\end{proof}

\begin{example} \label{ex.2}
On the Hilbert space $H = L^2 (\Omega )$ we consider the function $\E$ given by
\begin{align*}
 \E (u) & = \frac12 \int_\Omega |\nabla u|^2 + \int_\Omega F(u) \\
 & = \E_1 (u) + \E_2 (u) ,
\end{align*}
where $\E_1$ is as in Example \ref{ex.1} and $\E_2 (u) = \int_\Omega F(u)$ for some function $F\in C^1 (\R )$ with globally Lipschitz continuous derivative $F'$. The function $\E_2$ is continuous with respect to the norm topology in $L^2 (\Omega )$. By Example \ref{ex.1} and Lemma \ref{lem.1}, $\tau_\E$ coincides with the norm topology in $H^1 (\Omega )$. 
\end{example}

\begin{example} \label{ex.3}
More generally, if $\E$ is a function on a Hilbert space $H$, if the effective domain $V:= \dom{\E}$ is a subspace of $H$, equipped with a seminorm $|\cdot |_V$ such that $\|\cdot \|_V := |\cdot |_V + \|\cdot \|_H$ is a complete norm and $(V,\|\cdot \|_V )$ is a dual Banach space, and if $\E$ is a function of this seminorm (that is, $\E = f\circ |\cdot |_V$ for some continuous $f:\R\to\R$), then $\tau_\E$ is in general coarser than the norm topology of $V$. For example, consider the choice $H= L^2 (\Omega )$, $V = L^2 (\Omega ) \cap BV (\Omega )$ and $\E (u) = |u|_{TV}$ (the total variation seminorm). 
\end{example}

\begin{example} \label{ex.4}
Let $\E : M\to\eR$ be a function on a metric space $(M,d)$. Given a subset $C\subseteq M$, we define the characteristic function $1_C : M \to\eR$ by
\[
 1_C (u) := \begin{cases}
             0 & \text{if } u\in C , \\[2mm]
             +\infty & \text{else,} 
            \end{cases}
\]
and we let $\E_C := \E + 1_C$. Then $\E_C$ is proper if $\dom{\E_C} = \dom{\E}\cap C \not= \emptyset$. The topology $\tau_{\E_C}$ is the topology induced by $\tau_\E$ on $\dom{\E_C}$. Indeed, the metrics $d_\E$ and $d_{\E_C}$ (compare with Lemma \ref{lem.0}) coincide on $\dom{\E_C}$. 
\end{example}

\section{Stabilisation of global solutions of nonsmooth gradient systems}

Let $H$ be a Hilbert space and let $\E : H\to\eR$. We say that $\E$ is {\em semiconvex}, if there exists $\omega\in\R$ such that $u\mapsto \E (u) + \frac{\omega}{2} \, \| u\|_H^2$ is convex. The {\em subgradient} of $\E$ is the relation
\begin{align*}
 \partial\E & := \{ (u,f)\in H\times H : u\in\dom{\E} \text{ and for every } v\in H \\
 & \phantom{\{ (u,f)\in H\times H : } \liminf_{\lambda\to 0+} \frac{\E (u+\lambda v) - \E (u)}{\lambda} \geq \langle f , v\rangle_H \} . 
\end{align*}
For semiconvex $\E$ and $\omega\in\R$ large enough,
\begin{align*}
 \partial\E & = \{ (u,f)\in H\times H : u\in\dom{\E} \text{ and for every } v\in H \\
 & \phantom{\{ (u,f)\in H\times H : } \E (v) - \E (u) + \frac{\omega}{2} \, \| v-u\|_H^2 \geq \langle f , v-u\rangle_H \} . 
\end{align*}
For every $u\in H$ we set $\partial\E (u) := \{ f\in H : (u,f)\in\partial\E \}$, which is a closed and convex set. Furthermore, we define the {\em slope} $|\partial\E (u)| := \inf \{ \| f\|_H : f\in\partial\E (u)\}$, with the convention $\inf \emptyset = \infty$. If $\partial\E (u)$ is nonempty, then $|\partial\E (u)| = \| P_{\partial\E (u)} 0\|_H$, where $P_{\partial\E (u)}$ denotes the orthogonal projection onto $\partial\E (u)$. 

\begin{lemma} \label{lem.subgradient.closed}
 Let $\E : H\to\eR$ be proper, semiconvex and lower semicontinuous. Let $((u_n,f_n))$ be a sequence in $\partial\E$ and $(u,f)\in H\times H$ such that 
\[
 \lim_{n\to\infty} u_n = u \text{  and  } {\rm weak-\!\!\!}\lim_{n\to\infty} f_n = f .
\]
Then
\[
 (u,f)\in\partial\E \text{  and  } \lim_{n\to\infty} \E (u_n) = \E (u) .
\]
\end{lemma}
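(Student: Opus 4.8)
The plan is to work throughout with the characterisation of the subgradient that is valid for semiconvex functions: fix $\omega\in\R$ large enough that
\[
 \partial\E = \{ (u,f)\in H\times H : u\in\dom{\E} \text{ and } \E (v) - \E (u) + \tfrac{\omega}{2}\| v-u\|_H^2 \geq \langle f , v-u\rangle_H \text{ for all } v\in H \} .
\]
For each $n$, since $(u_n,f_n)\in\partial\E$, this gives the family of inequalities
\[
 \E (v) - \E (u_n) + \tfrac{\omega}{2}\| v-u_n\|_H^2 \geq \langle f_n , v-u_n\rangle_H \qquad (v\in H) . \quad (\star)
\]
The entire proof consists in passing to the limit in $(\star)$, and the two conclusions will follow from two different choices of the test element $v$.

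First I would record the two elementary facts that make the limit work. A weakly convergent sequence is bounded, so $C := \sup_n \| f_n\|_H < \infty$; consequently, since $u_n\to u$ strongly, the estimate $|\langle f_n , u-u_n\rangle_H| \leq C\,\| u-u_n\|_H$ shows $\langle f_n , u-u_n\rangle_H \to 0$, while $\langle f_n , w\rangle_H \to \langle f , w\rangle_H$ for every fixed $w\in H$ by weak convergence.

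The main point, and the only genuine obstacle, is to establish $u\in\dom{\E}$ together with $\lim_n \E (u_n) = \E (u)$; lower semicontinuity delivers only one of the two required inequalities. Since $\E$ is proper I may choose a reference point $w\in\dom{\E}$ and take $v=w$ in $(\star)$, which after rearranging reads
\[
 \E (u_n) \leq \E (w) + \tfrac{\omega}{2}\| w-u_n\|_H^2 - \langle f_n , w-u_n\rangle_H .
\]
By the facts above the right-hand side converges (to $\E (w) + \tfrac{\omega}{2}\| w-u\|_H^2 - \langle f , w-u\rangle_H$), so $\limsup_n \E (u_n) < +\infty$. Lower semicontinuity gives $\E (u) \leq \liminf_n \E (u_n)$, whence $\E (u) < +\infty$, i.e.\ $u\in\dom{\E}$. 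Now that $\E (u)$ is finite I may legitimately put $v=u$ in $(\star)$, rearrange to $\E (u_n) \leq \E (u) + \tfrac{\omega}{2}\| u-u_n\|_H^2 - \langle f_n , u-u_n\rangle_H$, and let $n\to\infty$ to obtain $\limsup_n \E (u_n) \leq \E (u)$. Combined with the lower semicontinuity estimate this forces $\lim_n \E (u_n) = \E (u)$.

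Finally, with $\E (u_n)\to\E (u)$ in hand, I fix an arbitrary $v\in H$ and pass to the limit in $(\star)$ term by term: $\E (u_n)\to\E (u)$, $\| v-u_n\|_H^2 \to \| v-u\|_H^2$ by strong convergence, and $\langle f_n , v-u_n\rangle_H = \langle f_n , v-u\rangle_H + \langle f_n , u-u_n\rangle_H \to \langle f , v-u\rangle_H$. This yields $\E (v) - \E (u) + \tfrac{\omega}{2}\| v-u\|_H^2 \geq \langle f , v-u\rangle_H$ for all $v\in H$, which together with $u\in\dom{\E}$ is precisely the statement $(u,f)\in\partial\E$.
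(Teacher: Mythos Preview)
Your proof is correct and follows essentially the same approach as the paper: both rely on the semiconvex characterisation of the subgradient, use lower semicontinuity for $\liminf_n \E(u_n)\geq\E(u)$, and the test choice $v=u$ for $\limsup_n \E(u_n)\leq\E(u)$. The only difference is the order of the conclusions: the paper first takes $\liminf$ on the right of $(\star)$ (using lower semicontinuity of $\E$) to obtain the limiting subgradient inequality and hence $(u,f)\in\partial\E$, and \emph{afterwards} sets $v=u$ to get the energy convergence, whereas you establish $\E(u_n)\to\E(u)$ first and then pass to the limit in $(\star)$ term by term.
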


\begin{proof}
By the characterisation of the subgradient of semiconvex functions, for some $\omega\in\R$ large enough, and for every $v\in H$ and every $n\in\N$,
\begin{equation} \label{eq.energy1}
 \E (v) \geq \E (u_n) + \langle f_n , v-u_n \rangle - \frac{\omega}{2} \, \| v-u_n\|_H^2 .
\end{equation} 
By taking the limit inferior on the right-hand side of this inequality, as $n\to\infty$, and by using the lower semicontinuity of $\E$, 
\[
 \E (v) \geq \E (u) + \langle f,v-u\rangle - \frac{\omega}{2} \, \| v-u\|_H^2 \text{ for every } v\in H .
\]
This inequality implies first (choose $v\in\dom{\E}$!) that $u\in\dom{\E}$, and second that $(u,f)\in\partial\E$. Choosing now $v=u$ in \eqref{eq.energy1}, and taking the limit superior on the right-hand side of that inequality, one obtains $\E (u) = \lim_{n\to\infty} \E (u_n )$. 
\end{proof}

If $\E$ is a proper, semiconvex, lower semicontinuous function on $H$, then the gradient system 
\begin{equation} \label{gs}
 \dot u + \partial\E (u) \ni f 
\end{equation}
admits for every $u_0 \in \dom{\E}$ and every $f\in L^2 (\R_+ ;H)$ a unique strong solution $u\in H^1_{loc}(\R_+ ;H)$ satisfying the initial condition $u(0) = u_0$ \cite[Th\'eor\`eme 3.6, p.72]{Br73}, \cite[Theorem 4.11]{Bar10}. Strong solution means that $(u(t) , f(t) - \dot u (t)) \in\partial\E$ for almost every $t\in\R_+$. For every strong solution $u$ the composition $\E (u)$ is absolutely continuous, and for almost every $t\in\R_+$ the energy equality
\begin{equation} \label{eq.energy}
 \frac{d}{dt} \E (u) = - \frac12 \| \dot u \|_H^2 - \frac12 \| P_{\partial\E (u)}f \|_H^2 + \frac12 \| f\|_H^2 
\end{equation}
holds (use \cite[Lemma 4.4]{Bar10} or compare with \cite[Th\'eor\`eme 3.6, p.72]{Br73}, \cite[Theorem 2.3.3]{AmGiSa05}). In particular, the function $\cH :\R_+ \to\R_+$, defined by 
\begin{equation} \label{eq.h}
 \cH (t) = \E (u(t)) + \frac12 \, \int_t^\infty \| f(s)\|_H^2 \; ds ,
\end{equation}
is absolutely continuous and decreasing. 

\begin{lemma} \label{lem.omega-limit}
 Let $\E : H\to\eR$ be proper, semiconvex and lower semicontinuous and $f\in L^2 (\R_+ ;H)$. Let $u\in H^1_{loc} (\R_+ ;H )$ be a solution of the gradient system \eqref{gs}, and consider its {\em $\omega$-limit set}
\[
 \omega (u) := \{ \varphi\in H : \exists (t_n)\nearrow\infty \text{ s.t. } \lim_{n\to\infty} u(t_n) = \varphi \text{ in } H \} .
\]
Then:
\begin{itemize}
 \item[(a)] For every $\varphi\in\omega (u)$ one has $\lim_{t\to\infty} \E (u(t)) = \E (\varphi )$.
 \item[(b)] The function $\E$ is constant on $\omega (u)$.
 \item[(c)] One has
\[
  \omega (u) = \{ \varphi\in H : \exists (t_n)\nearrow\infty \text{ s.t. } \lim_{n\to\infty} u(t_n) = \varphi \text{ w.r.t. } \tau_\E \} .
\]
\end{itemize}
\end{lemma}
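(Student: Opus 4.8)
The plan is to reduce all three assertions to a single claim: that the limit $L := \lim_{t\to\infty} \E (u(t))$ exists in $\R$ and that $\E (\varphi ) = L$ for every $\varphi\in\omega (u)$. Granting this, part (a) is immediate, part (b) follows because $\E$ then takes the constant value $L$ on $\omega (u)$, and part (c) follows because $\tau_\E$-convergence is by definition $H$-convergence together with convergence of the energies: the inclusion ``$\supseteq$'' in (c) is trivial, since $\tau_\E$ is finer than the topology of $H$, while for ``$\subseteq$'' one takes $\varphi\in\omega (u)$ with $u(t_n)\to\varphi$ in $H$ and observes that $\E (u(t_n))\to L = \E (\varphi )$, so that $u(t_n)\to\varphi$ already with respect to $\tau_\E$.

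First I would establish that $L$ exists. Since $u(0)\in\dom{\E}$ and $f\in L^2 (\R_+ ;H)$, the function $\cH$ from \eqref{eq.h} is finite at $0$, and being decreasing and nonnegative it converges to some $L_0 := \lim_{t\to\infty}\cH (t)\in\R_+$. As $f\in L^2 (\R_+ ;H)$, the tail $\frac12\int_t^\infty\|f\|_H^2$ tends to $0$, whence $L := \lim_{t\to\infty}\E (u(t)) = L_0$ exists and is finite. Integrating the energy equality \eqref{eq.energy} over $\R_+$ (equivalently, integrating $\cH' = -\frac12\|\dot u\|_H^2 - \frac12\|P_{\partial\E (u)}f\|_H^2$) shows that $\dot u\in L^2 (\R_+ ;H)$; together with $f\in L^2 (\R_+ ;H)$ this gives $g := f - \dot u\in L^2 (\R_+ ;H)$, where $g(t)\in\partial\E (u(t))$ for almost every $t$ by the definition of strong solution.

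It then remains to prove $\E (\varphi ) = L$ for a fixed $\varphi\in\omega (u)$, say $u(s_n)\to\varphi$ in $H$ with $s_n\nearrow\infty$. The lower semicontinuity of $\E$ gives one inequality at once, namely $\E (\varphi )\leq\liminf_n\E (u(s_n)) = L$. For the reverse inequality I would exploit that the slope is small along a well-chosen sequence. Since $g\in L^2$, the averages $\int_{s_n}^{s_n+1}\|g\|_H^2$ tend to $0$, so I may pick $t_n\in [s_n,s_n+1]$ at which the subgradient inclusion holds and $\|g(t_n)\|_H\to 0$; since also $\int_{s_n}^{s_n+1}\|\dot u\|_H^2\to 0$, the bound $\|u(t_n)-u(s_n)\|_H\leq\int_{s_n}^{s_n+1}\|\dot u\|_H$ forces $u(t_n)\to\varphi$ in $H$. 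Now $(u(t_n),g(t_n))\in\partial\E$ with $u(t_n)\to\varphi$ in $H$ and $g(t_n)\to 0$ (in particular weakly), so Lemma \ref{lem.subgradient.closed} yields $(\varphi ,0)\in\partial\E$ and, crucially, $\E (u(t_n))\to\E (\varphi )$. Since $t_n\to\infty$ we also have $\E (u(t_n))\to L$, and therefore $\E (\varphi ) = L$.

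The one delicate point, and the heart of the argument, is the reverse inequality $\E (\varphi )\geq L$: lower semicontinuity alone is not enough, and one must produce times $t_n$ that are simultaneously close in $H$ to the limit point $\varphi$ and at which the subgradient $g(t_n)$ is small enough to invoke the closedness of $\partial\E$. The mechanism that makes this possible is that $\dot u$ is square-integrable, so that $u$ varies arbitrarily slowly at infinity and a small time-shift from $s_n$ to a ``good slope'' time $t_n$ does not destroy the convergence $u(t_n)\to\varphi$.
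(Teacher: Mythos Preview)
Your argument is essentially the paper's: shift from the given times to nearby times where the subgradient $g=f-\dot u$ is small (using $\dot u,\,g\in L^2(\R_+;H)$), then apply Lemma~\ref{lem.subgradient.closed} to conclude $\E(u(t_n))\to\E(\varphi)$. The one slip is the assertion that $\cH$ is ``decreasing and nonnegative'': nothing in the hypotheses forces $\E\geq 0$, so $\cH$ need not be bounded below a priori, and hence the existence of $L$ is not yet secured at that stage. The paper avoids this by reversing the order: it first fixes $\varphi\in\omega(u)$ and uses lower semicontinuity to get $\liminf_n\cH(t_n)=\liminf_n\E(u(t_n))\geq\E(\varphi)$, which furnishes the lower bound on the decreasing function $\cH$ and hence the existence of $\lim_{t\to\infty}\E(u(t))$. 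With that small reordering (which also makes the separate verification $\E(\varphi)\leq L$ unnecessary), your proof coincides with the paper's.
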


\begin{proof}
Let $\varphi\in\omega (u)$, and let $(t_n)$ be a sequence in $\R_+$ such that $\lim_{n\to\infty} t_n = \infty$ and $\lim_{n\to\infty} u(t_n) = \varphi$ in $H$. Let $\cH$ be the function defined in \eqref{eq.h}. By lower semicontinuity of $\E$,
\[
 \liminf_{n\to\infty} \cH (t_n) = \liminf_{n\to\infty} \E (u(t_n)) \geq \E (\varphi ) ,
\]
so that $\cH$ is bounded from below. Since $\cH$ is also decreasing, 
\begin{equation} \label{eq.e}
 \lim_{t\to\infty} \cH (t) = \lim_{t\to\infty} \E (u(t)) \text{ exists.}
\end{equation}
Moreover, by the energy equality,  $\dot u \in L^2 (\R_+ ;H)$. From here we deduce, for every $s\in [0,1]$,
\begin{align*}
 \limsup_{n\to\infty} \| u(t_n +s ) - \varphi \|_H & \leq \limsup_{n\to\infty} (  \| u (t_n +s ) -u(t_n) \|_H + \| u(t_n)- \varphi \|_H ) \\
 & \leq \limsup_{n\to\infty} \left( \int_0^1 \| \dot u (t_n +r )\|_H \; dr + \| u(t_n)- \varphi \|_H \right) \\
 & = 0 . 
\end{align*}
Since $\dot u$, $f\in L^2 (\R_+ ;H)$ and $(u(t) , f(t) - \dot u (t))\in\partial\E$ for almost every $t$, we thus find a sequence $(s_n)\in [0,1]$ (depending on the representatives of the measurable functions $f$ and $\dot u$) such that $(u(t_n+s_n), f(t_n+s_n) - \dot u (t_n+s_n))\in\partial\E$,
\[
 \lim_{n\to\infty} u(t_n+s_n) = \varphi \text{ and } \lim_{n\to\infty} (f(t_n+s_n) - \dot u (t_n+s_n)) = 0 . 
\]
By Lemma \ref{lem.subgradient.closed}, this implies $(\varphi , 0)\in\partial\E$ and 
\[
 \lim_{n\to\infty} \E (u(t_n+s_n)) = \E (\varphi ) .
\]
From here and the convergence of $\E(u)$ (see \eqref{eq.e}) follows (a). Assertions (b) and (c) are direct consequences of (a). 
\end{proof}

We say that a function $\E : H \to\eR$ satisfies the {\em Kurdyka-{\L}ojasiewicz-Simon inequality} on a set $U\subseteq H$ if there exists a strictly increasing $\Theta\in W^{1,1}_{loc} (\R )$ such that $| \partial (\Theta\circ \E ) (v) | \geq 1$ for every $v\in U$ with $0\not\in\partial\E (v)$.

\begin{theorem} \label{thm.main}
Let $H$ be a Hilbert space and let $\E : H\to\eR$ be proper, semiconvex and lower semicontinuous. Let $u\in H^1_{loc} (\R_+;H)$ be a global strong solution of the gradient system \eqref{gs} with $f=0$. Assume that there exists $\varphi\in\omega (u)$ such that $\E$ satisfies the Kurdyka-{\L}ojasiewicz-Simon inequality in a $\tau_\E$-neighbourhood of $\varphi$. Then $u$ has finite length in $H$ and $\lim_{t\to\infty} u(t) = \varphi$ in $\tau_\E$.  
\end{theorem}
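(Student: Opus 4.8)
The plan is to prove the classical Łojasiewicz-Simon stabilisation argument, adapted to this nonsmooth setting with the weaker compactness assumption (relative compactness only in $H$, not in $\tau_\E$). I would first set up the standing facts: since $f=0$, the energy equality \eqref{eq.energy} gives $\frac{d}{dt}\E(u) = -\tfrac12\|\dot u\|_H^2 - \tfrac12\|P_{\partial\E(u)}0\|_H^2$, and since $P_{\partial\E(u)}0$ realises the slope, we have $\frac{d}{dt}\E(u) = -\tfrac12\|\dot u\|_H^2 - \tfrac12|\partial\E(u)|^2$ for almost every $t$. By Lemma \ref{lem.omega-limit}, $E_\infty := \lim_{t\to\infty}\E(u(t))$ exists and equals $\E(\varphi)$, and by Lemma \ref{lem.omega-limit}(a) the chosen $\varphi$ satisfies $(\varphi,0)\in\partial\E$, so $\varphi$ is an equilibrium and $E_\infty = \E(\varphi)$. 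If $\E(u(t_0)) = E_\infty$ for some finite $t_0$, monotonicity forces $\dot u = 0$ afterwards and the solution is already stationary, so I assume $\E(u(t)) > E_\infty$ for all $t$.

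The heart of the argument is to integrate $\Theta'(\E(u))$ against the energy decay, where $\Theta$ is the function from the Kurdyka-Łojasiewicz-Simon inequality, normalised so that $\Theta(E_\infty)=0$. On the set where $u(t)$ lies in the $\tau_\E$-neighbourhood of $\varphi$ and $0\notin\partial\E(u(t))$, the chain rule $|\partial(\Theta\circ\E)(u(t))| = \Theta'(\E(u(t)))\,|\partial\E(u(t))|$ together with the KŁS inequality gives $\Theta'(\E(u(t)))\,|\partial\E(u(t))| \geq 1$. I would then estimate
\begin{align*}
 -\frac{d}{dt}\,\Theta(\E(u(t))) &= -\Theta'(\E(u(t)))\,\frac{d}{dt}\E(u(t)) \\
 &= \Theta'(\E(u(t)))\,\Big(\tfrac12\|\dot u\|_H^2 + \tfrac12|\partial\E(u(t))|^2\Big) \\
 &\geq \Theta'(\E(u(t)))\,\|\dot u\|_H\,|\partial\E(u(t))| \\
 &\geq \|\dot u(t)\|_H,
\end{align*}
using the arithmetic-geometric mean inequality in the third line and the KŁS inequality in the last. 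Integrating this differential inequality over the time set where the trajectory stays in the neighbourhood bounds $\int\|\dot u\|_H\,dt$ by $\Theta(\E(u(t_{\mathrm{start}}))) - \Theta(E_\infty)$, which is finite; this is the finite-length conclusion, and finite length in $H$ forces $u(t)$ to converge in $H$ to a single limit, which must be $\varphi$.

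The main obstacle, and the reason the theorem is nontrivial, is that the KŁS inequality is only assumed to hold in a $\tau_\E$-neighbourhood $N$ of $\varphi$, whereas the differential inequality above is only valid while $u(t)\in N$. I must therefore run a trapping argument showing that once the trajectory enters $N$ and the remaining energy gap is small, it can never leave: the length estimate controls the $H$-distance travelled, and one uses the convergence $\E(u(t))\to\E(\varphi)$ to control the energy coordinate of $\tau_\E$, so that by Lemma \ref{lem.0} the $d_\E$-distance $\|u(t)-\varphi\|_H + |\E(u(t))-\E(\varphi)|$ stays small. Concretely, I would fix $\varepsilon>0$ with the $d_\E$-ball $B_{d_\E}(\varphi,\varepsilon)\subseteq N$, pick a time $t_n$ along the approximating sequence with $u(t_n)$ close to $\varphi$ in $H$ and $\E(u(t_n))$ close to $E_\infty$, and argue by a continuity/contradiction bootstrap that the trajectory remains in $B_{d_\E}(\varphi,\varepsilon)$ for all later times, so the integral estimate applies on all of $[t_n,\infty)$. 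The delicate point compared to the smooth case is precisely that $\tau_\E$-closeness requires controlling both the $H$-coordinate (via the length bound) and the energy coordinate (via monotone convergence of $\E(u)$ from above to $E_\infty$); since both are available, the trap closes, giving finite length and hence $\tau_\E$-convergence to $\varphi$.
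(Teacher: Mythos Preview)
Your proposal is correct and follows essentially the same approach as the paper: the same differential inequality $-\frac{d}{dt}(\Theta\circ\E)(u) \geq \|\dot u\|_H$ via the chain rule and AM--GM, followed by a trapping/exit-time contradiction that controls both the $H$-distance (via the integrated length bound) and the energy coordinate (via Lemma~\ref{lem.omega-limit}(a)). The paper phrases the trap as ``define $s_n := \sup\{s : u|_{[t_n,s]}\subset U\}$, assume all $s_n$ are finite, and deduce $u(s_n)\to\varphi$ in $\tau_\E$, contradicting $u(s_n)\notin U$,'' which is exactly the mechanism you describe with the metric $d_\E$.
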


For the proof of Theorem \ref{thm.main}, we need the following chain rule.

\begin{lemma} \label{lem.chain-rule}
Let $\E : H\to\eR$ be proper, $u\in\dom{\E}$ and let $\Theta : \R\to\R$ be continuous, strictly increasing and differentiable at $\E(u)$. Then $\Theta' (\E (u)) \,\partial\E (u) \subseteq \partial (\Theta \circ \E ) (u)$. Moreover, if $\Theta' (\E (u)) \not= 0$, then $\Theta' (\E (u)) \,\partial\E (u) = \partial (\Theta \circ \E ) (u)$.
\end{lemma}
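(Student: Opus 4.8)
The plan is to reduce the whole statement to a one-dimensional fact about difference quotients, exploiting that the subgradient is defined through the directional liminf. Fix $v\in H$ and abbreviate $a:=\E(u)$ (finite, since $u\in\dom\E$), $b_\lambda:=\E(u+\lambda v)\in\eR$, $g(\lambda):=(b_\lambda-a)/\lambda$, and $c:=\Theta'(\E(u))$, where $\Theta$ is extended to $\eR$ by $\Theta(+\infty):=\lim_{t\to+\infty}\Theta(t)$. Since $\Theta$ is increasing, $c\ge0$. The inclusion $\Theta'(\E(u))\,\partial\E(u)\subseteq\partial(\Theta\circ\E)(u)$ then amounts, for a given $f\in\partial\E(u)$ and $\beta:=\langle f,v\rangle_H$, to showing
\[
 \liminf_{\lambda\to 0+}\frac{(\Theta\circ\E)(u+\lambda v)-(\Theta\circ\E)(u)}{\lambda}\ \ge\ c\,\beta \qquad\text{for every } v\in H,
\]
the hypothesis $f\in\partial\E(u)$ providing $\liminf_{\lambda\to0+}g(\lambda)\ge\beta$.

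The heart of the matter, and the step I expect to be the main obstacle, is the purely scalar inequality
\[
 \liminf_{\lambda\to 0+}\frac{\Theta(b_\lambda)-\Theta(a)}{\lambda}\ \ge\ c\,\liminf_{\lambda\to 0+}g(\lambda),
\]
which I would prove by taking an arbitrary subsequential limit $L'$ of the left-hand quotient along some $\lambda_n\to0+$ and, after passing to a further subsequence, the corresponding limit $D'\in\eR$ of $g(\lambda_n)$; note $D'\ge\liminf g\ge\beta>-\infty$. The difficulty is that $\E$ is only proper, not continuous, so $b_\lambda$ need not tend to $a$: it may jump, stay bounded away from $a$, or equal $+\infty$. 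I would therefore distinguish cases according to the behaviour of $b_{\lambda_n}$. If $b_{\lambda_n}\to a$ with $b_{\lambda_n}$ finite, the factorisation $\frac{\Theta(b_{\lambda_n})-\Theta(a)}{\lambda_n}=\frac{\Theta(b_{\lambda_n})-\Theta(a)}{b_{\lambda_n}-a}\,g(\lambda_n)$ and differentiability of $\Theta$ at $a$ give $L'=cD'$, whence $L'\ge c\,\liminf g$ because $c\ge0$ and $D'\ge\liminf g$. If instead $b_{\lambda_n}\ge a+\delta$ along a subsequence for some $\delta>0$ (in particular if $b_{\lambda_n}=+\infty$), strict monotonicity gives $\Theta(b_{\lambda_n})-\Theta(a)\ge\Theta(a+\delta)-\Theta(a)>0$, so the quotient tends to $+\infty$ and the bound is trivial. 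Finally, $b_{\lambda_n}\le a-\delta$ along a subsequence is impossible, since it would force $g(\lambda_n)\to-\infty$, contradicting $\liminf g\ge\beta>-\infty$. The role of the sign $c\ge0$ is precisely to keep these inequalities correctly oriented; the only degenerate product $0\cdot(+\infty)$ occurs when $c=0$ and $b_{\lambda_n}\to a^+$, and there the nonnegativity of the quotient (as $b_{\lambda_n}>a$) already gives $L'\ge0=c\,\liminf g$.

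Granting this scalar inequality and combining it with $\liminf g\ge\beta$ and $c\ge0$ yields $\liminf_{\lambda\to0+}\frac{(\Theta\circ\E)(u+\lambda v)-(\Theta\circ\E)(u)}{\lambda}\ge c\beta=\langle cf,v\rangle_H$ for all $v$, that is $cf\in\partial(\Theta\circ\E)(u)$; this is the first inclusion. For the reverse inclusion under the assumption $c\ne0$ (so $c>0$), I would invoke the inverse-function rule: since $\Theta$ is continuous, strictly increasing and differentiable at $a$ with $\Theta'(a)\ne0$, the inverse $\Theta^{-1}$ is differentiable at $\Theta(a)=(\Theta\circ\E)(u)$ with derivative $1/c$. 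Extending $\Theta^{-1}$ to a continuous, strictly increasing function on all of $\R$ (and setting $\Theta^{-1}(\Theta(+\infty)):=+\infty$), its values outside $\rg\Theta$ are irrelevant because $\Theta\circ\E$ takes values in $\overline{\rg\Theta}$ and $\Theta^{-1}\circ(\Theta\circ\E)=\E$. Applying the already-proved inclusion to the function $\Theta\circ\E$ and the map $\Theta^{-1}$ then gives $\tfrac1c\,\partial(\Theta\circ\E)(u)\subseteq\partial\E(u)$, i.e. $\partial(\Theta\circ\E)(u)\subseteq c\,\partial\E(u)$, which together with the first inclusion yields the claimed equality.
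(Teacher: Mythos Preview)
Your argument is correct, and the reverse inclusion via $\Theta^{-1}$ matches the paper's approach. For the forward inclusion, however, the paper takes a considerably shorter route that avoids all case analysis: given $\varepsilon>0$, the definition of the liminf furnishes $\delta>0$ with $\E(u+\lambda v)\ge \E(u)+\lambda(\langle f,v\rangle-\varepsilon)$ for $\lambda\in(0,\delta)$, and then monotonicity of $\Theta$ is applied \emph{before} forming the difference quotient, yielding
\[
\frac{(\Theta\circ\E)(u+\lambda v)-(\Theta\circ\E)(u)}{\lambda}\ \ge\ \frac{\Theta\bigl(\E(u)+\lambda(\langle f,v\rangle-\varepsilon)\bigr)-\Theta(\E(u))}{\lambda}\ \to\ \Theta'(\E(u))\,(\langle f,v\rangle-\varepsilon).
\]
Letting $\varepsilon\to0$ finishes. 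The point is that replacing $\E(u+\lambda v)$ by a linear-in-$\lambda$ lower bound turns the right-hand side into an ordinary difference quotient of $\Theta$ at the fixed point $\E(u)$, so differentiability applies directly and there is no need to track whether $b_\lambda$ converges to $a$, stays away, or equals $+\infty$. Your factorisation-and-subsequence approach reaches the same conclusion and is perhaps more explicit about where strict monotonicity (rather than mere monotonicity) enters, but it is longer and must handle the indeterminate product $0\cdot(+\infty)$ that the paper's argument never encounters.
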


\begin{proof}
Let $f\in\partial\E (u)$ and $v\in H$. Let $\varepsilon >0$. Then there exists $\delta >0$ such that
\[
 \inf_{\lambda\in (0,\delta )} \frac{\E (u+\lambda v)-\E (u)}{\lambda} \geq \langle f, v\rangle - \varepsilon , 
\]
that is,
\[
 \E (u+\lambda v) \geq \E (u) + \lambda \, (\langle f,v\rangle - \varepsilon ) \text{ for every } \lambda \in (0,\delta ). 
\]
Due to the monotonicity of $\Theta$, we obtain
\begin{align*}
 \frac{(\Theta\circ \E ) (u+\lambda v) - (\Theta\circ\E ) (u)}{\lambda} & \geq \frac{\Theta (\E (u) +\lambda (\langle f,v\rangle -\varepsilon )) - \Theta (\E (u))}{\lambda} \\
 & \to \Theta' (\E (u)) \, (\langle f,v\rangle - \varepsilon) \quad \text{as } \lambda \to 0+ .
\end{align*}
Therefore, since this inequality holds for every $\varepsilon >0$,
\[
 \liminf_{\lambda\to 0+} \frac{(\Theta\circ \E ) (u+\lambda v) - (\Theta\circ\E ) (u)}{\lambda} \geq \langle \Theta' (\E (u)) \, f ,v\rangle .
\]
As a consequence $\Theta' (\E (u)) f \in \partial (\Theta \circ \E ) (u)$. 

If $\Theta' (\E (u)) \not= 0$, we may repeat the argument above with the inverse function $\Theta^{-1}$, which is continuous, strictly increasing, and differentiable at $(\Theta\circ\E ) (u)$, and we obtain the converse inclusion.   
\end{proof}

\begin{proof}[Proof of Theorem \ref{thm.main}]
Let $\varphi$ be as in the assumption, and let $U$ be a $\tau_\E$-neigh\-bour\-hood of $\varphi$ such that $\E$ satisfies the Kurdyka-{\L}ojasiewicz-Simon inequality in $U$. This means that there exists a strictly increasing $\Theta\in W^{1,1}_{loc} (\R )$ such that $| \partial (\Theta\circ \E ) (v) | \geq 1$ for every $v\in U$ with $0\not\in\partial\E (v)$. 

Since the energy is decreasing along the solution $u$, $\E (u(t)) \geq \E (\varphi )$ for every $t\in\R_+$. If $\E (u(t)) = \E (\varphi )$ for some $t\in\R_+$, then the energy is eventually constant along $u$, which implies that $u$ is eventually constant. In this case, there remains nothing to prove.   

Hence, we may assume that $\E (u(t)) >\E (\varphi )$ for every $t\in\R_+$. In this case, $\E (u)$ is strictly decreasing, and $\dot u(t) \not= 0$ for almost every $t\in\R_+$. By assumption and by Lemma \ref{lem.omega-limit} (c), there exists a sequence $(t_n)$ in $\R_+$ such that $\lim_{n\to\infty} t_n = \infty$ and $\lim_{n\to\infty} u(t_n) = \varphi$ in $\tau_\E$. Without loss of generality, we may assume that $u(t_n)\in U$ for every $n$. For every $n$ we set
\[
 s_n := \sup \{ s\in [t_n ,\infty ): u(t)\in U \text{ for every } t\in [t_n ,s]\} .
\]
Since $u$ is continuous with values in $(\dom{\E} , \tau_\E )$ and since $U$ is open in this space, $s_n >t_n$. For almost every $t\in [t_n , s_n )$, by the chain rule, the energy equality and Lemma \ref{lem.chain-rule},
\begin{align}
\nonumber -\frac{d}{dt} (\Theta \circ \E ) (u(t)) & = - \Theta' (\E (u(t))) \, \frac{d}{dt} \E (u(t)) \\
\nonumber  & = \frac12 \, \Theta' (\E (u(t))) \, (\| \dot u(t)\|_H^2 + |\partial\E (u(t))|^2 ) \\
\nonumber  & \geq \Theta' (\E (u(t))) \, \| \dot u (t)\|_H \, |\partial\E (u(t))| \\
\nonumber  & \geq \| \dot u (t)\|_H \, |\partial (\Theta \circ \E ) (u(t)) | \\
\label{est1} & \geq \|\dot u (t)\|_H . 
\end{align}
Integrating both sides, we obtain
\begin{align*}
 \| u(t) - u(t_n) \|_H & \leq \int_{t_n}^t \|\dot u (s) \|_H \; ds \\
 & \leq (\Theta \circ \E )(u(t_n)) - (\Theta \circ \E ) (u(t)) \\
 & \leq (\Theta \circ \E )(u(t_n)) - (\Theta \circ \E ) (\varphi ) . 
\end{align*}
Assume now that all $s_n$ are finite. Then, by continuity, the preceding inequality remains true for $t$ replaced by $s_n$, and thus
\begin{align*}
 \| u(s_n )-\varphi \|_H & \leq \| u(s_n ) - u(t_n) \|_H + \| u(t_n)-\varphi \|_H \\
 & \leq (\Theta\circ\E )(u(t_n)) - (\Theta\circ \E ) (\varphi ) + \| u(t_n)-\varphi \|_H .
\end{align*}
The convergence of $(u(t_n))$ to $\varphi$ in $\tau_\E$ and the continuity of $\Theta$ then imply that the right-hand side of this inequality converges to $0$ as $n\to\infty$. As a consequence,
\[
 \lim_{n\to\infty} u(s_n ) = \varphi \text{ in the norm topology of } H .
\]
This and Lemma \ref{lem.omega-limit} (a) yield
\[
 \lim_{n\to\infty} u(s_n ) = \varphi \text{ in the topology } \tau_\E ,
\]
which is, however, a contradiction since $u(s_n )\not\in U$ for every $n$. Hence, the assumption that all $s_n$ are finite was false. There thus exists $n$ such that $s_n = \infty$. In this case, the estimate \eqref{est1} implies $\dot u\in L^1 ([t_n , \infty );H )$, so that $u$ has finite length in $H$. By Cauchy's criterion, combined with Lemma \ref{lem.omega-limit} (a), we deduce $\lim_{t\to\infty} u(t) = \varphi$ in $\tau_\E$. 
\end{proof}

\begin{remark}
 We emphasize that the $\omega$-limit set of the solution $u$ in Theorem \ref{thm.main} is taken with respect to the norm topology in the ambient Hilbert space $H$. A condition for the nonemptiness of the $\omega$-limit is the condition that the range of $u$ is relatively compact in the norm topology of $H$. In many applications, this follows from mere boundedness of the solution in $H$, from the boundedness of the energy along $u$, and from standard compact embedding theorems. 

 Many articles on applications of the {\L}ojasiewicz-Simon inequality in the context of smooth gradient systems required in addition nonemptiness of the $\omega$-limit set in a finer topology. In the context of Example \ref{ex.2}, this would be the norm topology of the Sobolev space $H^1 (\Omega )$. This was usually verified by showing that the solution has relatively compact range in the underlying energy space, sometimes with considerable effort. Note that in Example \ref{ex.2}, the norm topology in $H^1 (\Omega )$ and the topology $\tau_\E$ coincide. Moreover, by Lemma \ref{lem.omega-limit} (c), the $\omega$-limit set with respect to the norm topology in $H$ and the $\omega$-limit set with respect to the topology $\tau_\E$ coincide.  
\end{remark}

\nocite{Ch03}
\nocite{Ku98}
\nocite{AmGiSa05}
\nocite{HaJe15}
\nocite{Hu06}
\nocite{Br73}

\bibliographystyle{plain}

%\bibliography{ralph}

\begin{thebibliography}{10}

\bibitem{AmGiSa05}
L.~Ambrosio, N.~Gigli, and G.~Savar\'e.
\newblock {\em Gradient {F}lows}.
\newblock Lectures in Mathematics ETH Z\"urich. Birkh\"auser, Basel, 2005.

\bibitem{Bar10}
V.~Barbu.
\newblock {\em Nonlinear differential equations of monotone types in {B}anach
  spaces}.
\newblock Springer Monographs in Mathematics. Springer, New York, 2010.

\bibitem{Br73}
H.~Brezis.
\newblock {\em Op\'erateurs maximaux monotones et semi-groupes de contractions
  dans les espaces de {H}ilbert}, volume~5 of {\em North Holland Mathematics
  Studies}.
\newblock North-Holland, Amsterdam, London, 1973.

\bibitem{Ch03}
R.~Chill.
\newblock On the {{\L}}ojasiewicz-{S}imon gradient inequality.
\newblock {\em J. Funct. Anal.}, 201:572--601, 2003.

\bibitem{HaJe15}
A.~Haraux and M.~A. Jendoubi.
\newblock {\em The convergence problem for dissipative autonomous systems}.
\newblock SpringerBriefs in Mathematics. Springer, Cham; BCAM Basque Center for
  Applied Mathematics, Bilbao, 2015.
\newblock Classical methods and recent advances, BCAM SpringerBriefs.

\bibitem{Hu06}
{S.-Z.} Huang.
\newblock {\em Gradient {I}nequalities: with {A}pplications to {A}symptotic
  {B}ehaviour and {S}tability of {G}radient-like {S}ystems}, volume 126 of {\em
  Mathematical Surveys and Monographs}.
\newblock Amer. Math. Soc., Providence, R.I., 2006.

\bibitem{Je98b}
M.~A. Jendoubi.
\newblock A simple unified approach to some convergence theorems of {L}.
  {S}imon.
\newblock {\em J. Funct. Anal.}, 153:187--202, 1998.

\bibitem{Ku98}
K.~Kurdyka.
\newblock On gradients of functions definable in {$o$}-minimal structures.
\newblock {\em Ann. Inst. Fourier (Grenoble)}, 48:769--783, 1998.

\bibitem{Lo63}
S.~{\L}ojasiewicz.
\newblock Une propri\'et\'e topologique des sous-ensembles analytiques r\'eels.
\newblock In {\em Colloques internationaux du C.N.R.S.: Les \'equations aux
  d\'eriv\'ees partielles, Paris (1962)}, pages 87--89. Editions du C.N.R.S.,
  Paris, 1963.

\bibitem{Lo65}
S.~{\L}ojasiewicz.
\newblock Ensembles semi-analytiques.
\newblock Preprint, I.H.E.S. Bures-sur-Yvette, 1965.

\bibitem{Si83}
L.~Simon.
\newblock Asymptotics for a class of non-linear evolution equations, with
  applications to geometric problems.
\newblock {\em Ann. of Math.}, 118:525--571, 1983.

\end{thebibliography}

\vskip 4 mm

\end{document}